\newcommand{\Rb}{\mathbb R}
\newcommand{\Hb}{\mathbb H}
\newcommand{\Zb}{\mathbb Z}
\newcommand{\lb}{\left(}
\newcommand{\rb}{\right)}
\newcommand{\enbrace}[1]{\lb #1 \rb}
\newcommand{\enbracesq}[1]{\left[ #1 \right]}
\newcommand{\setdef}[2]{\left\{ #1\ \left|\ #2 \right.\right\}}
\newcommand{\vmod}[1]{\left| #1 \right|}
\newcommand{\vvec}[1]{ {\rm\bf #1} }
\newcommand{\Vc}{ \vvec{c} }
\newcommand{\Ve}{ \vvec{e} }
\renewcommand{\leq}{ \leqslant }
\renewcommand{\geq}{ \geqslant }
\spnewtheorem{statement}{Statement}{\bf}{\it}
\DeclareMathOperator{\dist}{dist}
\author{Dmitry Todorov}
\title{\bf \LARGE Non-normability of spaces of Keplerian orbits}
\institute{D. Todorov \at
Chebyshev laboratory, Saint-Petersburg State University, \\
14th line of Vasiljevsky Island, 29B,  199178, \\
Saint-Petersburg, Russia, \\
\email{todorovdi@gmail.com} }
\date{Received: 21.10.2012}
\thanks{Research supported by the Chebyshev laboratory
(grant of the Russian government N 11.G34.31.0026).}
\begin{document}
\maketitle

\begin{abstract}
We prove that spaces of Keplerian curvilinear orbits, all orbits and
elliptic orbits with marked pericenter cannot carry a norm, compatible
with their standard topology. We also prove that the space of Keplerian
elliptic orbits without marked pericenter cannot carry a norm, compatible
 with the natural metrics on it.
\keywords{ space of Keplerian orbits \and topological space \and normed space \and normability}
\end{abstract}

\makeatletter{}\section{Introduction}

In \cite{msko} and \cite{holmetr} several spaces of Keplerian orbits are defined.
The metric spaces of curvilinear orbits $\Hb(b)$ and of all orbits $\Hb$ are described in \cite{msko}. Metrics on both spaces agree with the standard topology, induced from $\Rb^d,$ where $d$ is either $6$ or $7,$ correspondingly. Moreover it is shown that both $\Hb(b)$ and $\Hb$ are $5$-dimensional algebraic manifolds without singularities. They are open and arcwise-connected subsets of the corresponding $\Rb^d.$ The space $\Hb$ is complete. Spaces $E^*$ and $E$ of elliptic orbits were introduced in \cite{holmetr}.
All these spaces are important and often used. One can hope to introduce a structure of a normed space, that agree with the standard topology, on one of these spaces.
We show that one shouldn't try to do this, because it is impossible due to some topological reasons.
\makeatletter{}\section{Definitions}
In the space $\Rb^3\setminus\{0\}$ consider the following equation
\begin{equation}
\ddot{\vvec{r}}+\varkappa^2 \frac{ \vvec{r} }{r^3}=0. \label{orbitode}
\end{equation}

\begin{remark}
Let $\vvec{r}(t)$ be a nonextendable solution of equation \eqref{orbitode}, $t_\alpha<t<t_\omega.$ In case of a general position$ -t_\alpha = t_\omega = \infty.$ For linear trajectories for nonnegative full energy $h,$ one of the boundary values $t_\alpha, t_\omega$ is finite and other is infinite. For $h<0$ both boundary values $t_\alpha, t_\omega$ are finite.
\end{remark}

\begin{definition}
We define an \emph{orbit} as a class of all parametrizations $\vvec{r}(t(t')),$ where $t(t')$ is a continuous increasing on $(t'_\alpha, t'_\omega )$ function and $t'_\alpha = \lim_{t'\to t'_\alpha} t',\ t'_\omega = \lim_{t'\to t'_\omega} t'.$
\end{definition}

\begin{remark}
Roughly speaking, an orbit is a set of points $\{\vvec{r}(t)\}$ with a fixed ordering (orientation, that is defined by the time of reaching a point).
\end{remark}

It is easy to see that the space of all orbits is $5$-dimensional. This is due to $6$ integration constants of equation \eqref{orbitode} when we do not consider a position of a point on an orbit. As usual, it is easier to embed this space in a space, having higher dimension. We do it below.

Let $h$ be the full energy, $\vvec{c}$ be the angular momentum vector, $\vvec{e}$ be the Laplace vector:
\begin{eqnarray}
h=\frac{1}{2}\dot{\vvec{r}}-\frac{\varkappa^2}{2},\quad
\vvec{c}=\vvec{r} \times \dot{\vvec{r}},\quad
\vvec{e} = \frac{\dot{\vvec{r}} \times \vvec{c}}{\varkappa^2} - \frac{\vvec{r}}{r}. \label{ints}
\end{eqnarray}
Variables \eqref{ints} are uniquely determined by an orbit. Seven (scalar) variables \eqref{ints} satisfy the following two relations
\begin{eqnarray}
\Vc\cdot\Ve=0, \label{Hconecond} \\
2hc^2-\varkappa^4(e^2-1)=0. \label{Hquadriccond}
\end{eqnarray}
One can find these formulae in every elementary book on celestial mechanics. Recall that a curvilinear orbit is uniquely determined by orthogonal vectors $\vvec{c},\vvec{e}$ when $c>0.$
In particular any vector $\vvec{c}\neq 0$ corresponds to a unique circular orbit, for which $\vvec{e}=0.$ Sometimes it is convinient to distinguish between circular orbits for fixed  $\vvec{c}\neq 0.$
To be able to do it, one can take a nonzero vector $\vvec{e}$ and pass to a limit when $e\to 0$ saving the direction of the vector $\vvec{e}.$ We call them orbits \emph{with marked pericenter}.

For a linear orbit we have $\vvec{c}=0,\ e=1.$ This gives us a two-dimensional manifold (in fact it is the unit sphere, since any unit vector $\vvec{e}$ is allowed). Nevertheless, any vector $\vvec{e}$ correspond to a continuum of orbits with parameter $h\in (-\infty,\infty).$

Now we are able to define spaces of orbits.

Fix a nonnegative $b.$

\begin{definition}
We define the \emph{set of curvilinear orbits} with angular momentum, greater than $b$ as
\begin{equation*}
\Hb(b)=\setdef{(\Vc,\Ve) \in \Rb^6}{\hbox{ $\Vc,\Ve$ satisfy  \eqref{Hconecond} and $c>b$ }}.
\end{equation*}
\end{definition}

\begin{remark}
The space $\Hb(0)$ contains all curvilinear orbits.
\end{remark}

\begin{definition}
We define the \emph{set of all orbits} as:
\begin{equation*}
\Hb=\setdef{(\Vc,\Ve,h) \in \Rb^7}{\hbox{ $\Vc,\Ve$ satisfy \eqref{Hconecond} and $\Vc,\Ve,h$ satisfy \eqref{Hquadriccond} } }.
\end{equation*}
\end{definition}

On $\Hb(b)$ and $\Hb$ we consider metrics that are induced from $\Rb^6$ and $\Rb^7$ correspondingly.

\begin{definition}
We define the \emph{set of all elliptic orbits with marked pericenter} as
\begin{equation*}
E^*=\setdef{(\Vc,\Ve) \in \Hb(0)}{e < 1}.
\end{equation*}
Consider the following metrics on it
\begin{equation*}
\varrho^*_p(\mathcal{E}_1,\mathcal{E}_2)=
\begin{cases}
\displaystyle \enbrace{ \frac{1}{2\pi} \int_{u} \enbracesq{ \dist(Q_1(u),Q_2(u)) }^p  du  }^{1/p}, & p<\infty; \\ \max\limits_u\dist(Q_1(u),Q_2(u)), & p=\infty.
\end{cases}
\end{equation*}
Here $\mathcal{E}_k\in E^*,$ and $Q_k$ is a parametrization that corresponds to $\mathcal{E}_k.$ The parameter $u$ defines a position on the elliptic orbit. It is the eccentric anomaly.

\end{definition}

\begin{remark}
All the metrics $\varrho_p^*$ generate the same topology.
\end{remark}

Define the following pseudometrics on $E^*$
\begin{equation*}
\varrho_p(\mathcal{E_1} , \mathcal{E_2})=
\begin{cases}
\displaystyle \min\limits_s\enbrace{ \frac{1}{2\pi} \int_{u}  \enbracesq{\dist(Q_1(u),Q_2(u+s)) }^p du }^{1/p}  , & p<\infty; \\ \min\limits_s \max\limits_u\dist(Q_1(u),Q_2(u+s)), & p=\infty.
\end{cases}
\end{equation*}
Here again $\mathcal{E}_k\in E^*,$ and $Q_k$ is a parametrization that corresponds to $\mathcal{E}_k.$ The parameter $u$ defines a position on the elliptic orbit. It is the eccentric anomaly.

\begin{definition}
We define \emph{the set of all elliptic orbits without marked pericenter} as
\begin{equation*}
E=E^*/\sim,
\end{equation*}
where $x\sim y$ when $\varrho_p(x,y)=0.$
\end{definition}

\begin{remark}
In the previous definition all the points $(\Vc,0)$ become one point. On the space $E$ pseudometrics $\varrho_p$ are nondegenerate. This means that they are metrics on $E.$
\end{remark}
\makeatletter{}\section{Main Results}

\begin{theorem}  \label{thm:mainHb}
For any $b\geq 0$ there does not exist a way to define a structure of a finite-dimensional topological vector space over $\Rb$ on the space $\Hb(b).$
\end{theorem}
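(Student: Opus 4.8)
The plan is to exploit the fact that the topology on $\Hb(b)$ is already fixed: it is the standard one induced from $\Rb^6$, so a \emph{compatible} structure of a finite-dimensional topological vector space would have to reproduce exactly this topology. I would invoke the classical theorem of functional analysis that every finite-dimensional Hausdorff topological vector space over $\Rb$ of dimension $n$ is linearly homeomorphic to $\Rb^n$ with its usual topology. Since the standard topology on $\Hb(b)$ is metrizable and hence Hausdorff, the existence of such a structure would force $\Hb(b)$ to be homeomorphic to some $\Rb^n$, and in particular to be contractible. It therefore suffices to prove that $\Hb(b)$ is \emph{not} contractible.

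The key step is to compute the homotopy type of $\Hb(b)$. I would use the projection $(\Vc,\Ve)\mapsto\Vc$: for a fixed $\Vc$ with $c>b$ the admissible $\Ve$ fill the plane orthogonal to $\Vc$, so $\Hb(b)$ is the total space of a rank-$2$ vector bundle over $\{\Vc\in\Rb^3 : c>b\}$. The natural move is to collapse these fibres by the explicit homotopy
\begin{equation*}
H_t(\Vc,\Ve)=\enbrace{\Vc,\ (1-t)\Ve},\qquad t\in[0,1].
\end{equation*}
This map is continuous, it leaves $\Vc$ untouched (so both the constraint $c>b$ and the orthogonality relation $\Vc\cdot\Ve=0$ are preserved for every $t$), and it deformation retracts $\Hb(b)$ onto the zero section $\{(\Vc,0):c>b\}$, which is homeomorphic to the region $\{\Vc\in\Rb^3:c>b\}$.

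For every $b\geq0$ this region is $\Rb^3$ with a closed ball (a single point when $b=0$) removed, and the radial retraction onto any sphere $c=R$ with $R>b$ shows that it is homotopy equivalent to $S^2$. Hence $\Hb(b)\simeq S^2$ for all $b\geq0$. Finally, $S^2$ is not contractible, which I would certify by a homotopy invariant: $H_2(S^2;\Zb)=\Zb\neq0$ whereas $H_2(\Rb^n;\Zb)=0$ for every $n$ (equivalently $\pi_2(S^2)=\Zb\neq0$). This contradicts the contractibility forced in the first step, so no finite-dimensional topological vector space structure compatible with the standard topology can exist on $\Hb(b)$.

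The only genuinely delicate point is the second step: one must check that the retraction remains inside the constraint set for all $t$ and all $b\geq0$ simultaneously, and that collapsing the $2$-plane fibres does not alter the homotopy type — which is precisely why shrinking $\Ve$ linearly to $0$ (rather than, say, normalizing it) is the correct choice. Once the homotopy equivalence $\Hb(b)\simeq S^2$ is established, the remainder is a single citation together with a one-line invariant computation.
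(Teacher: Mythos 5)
Your proposal is correct and follows essentially the same route as the paper: both reduce the problem to showing $\Hb(b)\simeq S^2$ by collapsing the plane of admissible $\Ve$ over each $\Vc$ and then radially retracting $\{c>b\}$ onto a sphere, and both rule out homeomorphism with a Euclidean space via a degree-two invariant (you use $H_2(S^2;\Zb)=\Zb$, the paper uses $\pi_2(S^2)=\Zb$). The only cosmetic differences are that you write the fibrewise contraction $H_t(\Vc,\Ve)=(\Vc,(1-t)\Ve)$ explicitly where the paper cites the general fact that a bundle with contractible fibres is homotopy equivalent to its base, and that you avoid pinning down the dimension by noting that any $\Rb^n$ is contractible.
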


\begin{theorem}  \label{thm:mainH}
There does not exist a way to define a structure of a finite-dimensional topological vector space over $\Rb$ on the space $\Hb.$
\end{theorem}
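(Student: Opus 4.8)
The plan is to use the standard fact that every finite-dimensional topological vector space over $\Rb$ is linearly homeomorphic to $\Rb^n$ with its usual topology, hence contractible. Consequently, to prove Theorem~\ref{thm:mainH} it is enough to show that $\Hb$, with the topology induced from $\Rb^7$, is \emph{not} contractible; I will do this by computing that $H_2(\Hb)\neq 0$. Throughout I use that $\Hb$ is a smooth $5$-manifold, as recalled in the introduction (this holds even along the linear orbits, since the gradients of the left-hand sides of \eqref{Hconecond} and \eqref{Hquadriccond} stay independent there).

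First I would split $\Hb$ by whether the orbit is curvilinear. Let $U=\{(\Vc,\Ve,h)\in\Hb : c>0\}$ and let $L=\{(\Vc,\Ve,h)\in\Hb : c=0\}$. On $L$ relation \eqref{Hquadriccond} forces $e=1$ with $h$ free, so $L\cong S^2\times\Rb\simeq S^2$. On $U$ relation \eqref{Hquadriccond} determines $h$ from $(\Vc,\Ve)$, so $U$ is homeomorphic to $\Hb(0)$; the deformation $(\Vc,\Ve)\mapsto(\Vc,t\Ve)$, $t:1\to 0$, respects \eqref{Hconecond} and retracts $\Hb(0)$ onto the circular orbits $\{\Ve=0,\ c>0\}\cong\Rb^3\setminus\{0\}$, so $U\simeq S^2$, exactly as in the proof of Theorem~\ref{thm:mainHb}.

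Next I would take an open tubular neighbourhood $N$ of the closed submanifold $L$ and apply Mayer--Vietoris to the open cover $\Hb=U\cup N$. Since $N$ retracts onto $L$, we have $N\simeq S^2$, and $N\cap U=N\setminus L$ retracts onto the unit sphere bundle of the normal bundle of $L$. The geometric crux is to identify that normal bundle: near a point of $L$ (where $\Vc=0$ and $|\Ve|=1$) the transverse directions are exactly the admissible directions of $\Vc$, namely $\Ve^\perp=T_{\Ve}S^2$. Hence the normal bundle is the tangent bundle $TS^2$ (pulled back over the trivial $h$-factor), its unit sphere bundle is the unit tangent bundle $UTS^2\cong\Rb P^3$, and $N\cap U\simeq\Rb P^3$. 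The Mayer--Vietoris sequence around degree two reads $H_2(\Rb P^3)\to H_2(N)\oplus H_2(U)\to H_2(\Hb)\to H_1(\Rb P^3)$, that is $0\to\Zb^2\to H_2(\Hb)\to\Zb/2\Zb$; the left map is injective, so $H_2(\Hb)\neq 0$. Thus $\Hb$ is not contractible, so it cannot be homeomorphic to any $\Rb^n$, which proves the theorem.

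I expect the main obstacle to be the normal-bundle identification: one must check carefully that the link of $L$ is the twisted bundle $UTS^2\cong\Rb P^3$ rather than a trivial $S^2\times S^1$, which is precisely why the circular-orbit sphere of $U$ does not simply cancel against $L$ in homology. The conclusion is, however, robust: any reasonable model of the link still yields $H_2(\Hb)\neq 0$, and as a consistency check the Euler characteristics satisfy $\chi(\Hb)=\chi(N)+\chi(U)-\chi(N\cap U)=2+2-0=4$.
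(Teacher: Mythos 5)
Your proof is correct, but it takes a genuinely different route from the paper. The paper does not compute any homotopy or homology invariant of $\Hb$ at all: it first notes that a candidate vector-space structure would have to be $5$-dimensional (since $\Hb$ is a $5$-manifold), hence homeomorphic to $\Rb^5$, and then shows $\Hb\not\cong\Rb^5$ by an elementary separation argument --- the compact ellipsoid $\{h=-1\}$ cut out by \eqref{Hquadriccond} splits $\Hb$ into the two connected pieces $\{h<-1\}$ and $\{h>-1\}$, both unbounded, whereas the complement of a compact set in $\Rb^5$ has exactly one unbounded component. Your argument instead establishes non-contractibility via $H_2(\Hb)\neq 0$, using the Mayer--Vietoris decomposition of $\Hb$ into the curvilinear part $U\simeq S^2$ and a tubular neighbourhood of the linear locus $L\cong S^2\times\Rb$; I checked the key geometric input, and the normal bundle of $L$ is indeed the pullback of $TS^2$ (the tangent space to $\Hb$ at a point of $L$ is $\{\delta\Vc\perp\Ve\}\oplus\{\delta\Ve\perp\Ve\}\oplus\Rb\,\delta h$, and $TL$ exhausts the last two factors), so the link is $UTS^2\cong\Rb P^3$ and the sequence $0\to\Zb^2\to H_2(\Hb)\to\Zb/2\Zb$ is as you state. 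Each approach buys something: the paper's is far more elementary (point-set topology only, no smoothness or tubular neighbourhoods needed), while yours avoids any appeal to the dimension of $\Hb$, since non-contractibility rules out homeomorphism with $\Rb^n$ for every $n$. Note also that the paper separately proves $\Hb\cong\Rb\times S^2\times S^2$, which by the K\"unneth formula gives $H_2(\Hb)=\Zb^2$ directly and confirms both your extension computation and your Euler characteristic check $\chi(\Hb)=4$; had you spotted that homeomorphism first, the Mayer--Vietoris work would have been unnecessary.
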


\begin{theorem}  \label{thm:mainEstar}
There does not exist a way to define a structure of a finite-dimensional topological vector space over $\Rb$ on the space $E^*.$
\end{theorem}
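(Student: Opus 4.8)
The plan is to reduce the statement to a purely homotopy-theoretic fact: the space $E^*$ is not contractible, and therefore cannot be homeomorphic to any $\Rb^n$. This suffices because the natural topology on $E^*$ is metrizable (by any of the $\varrho_p^*$, and equally as a subspace of $\Rb^6$), hence Hausdorff; and every finite-dimensional Hausdorff topological vector space over $\Rb$ of algebraic dimension $n$ is linearly homeomorphic to $\Rb^n$, which is contractible. So if $E^*$ admitted such a structure compatible with its topology, $E^*$ would be contractible, and it is enough to exhibit one nonvanishing reduced homotopy or homology invariant of $E^*$.

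First I would pin down the homotopy type of $E^*=\setdef{(\Vc,\Ve)\in\Rb^6}{\Vc\cdot\Ve=0,\ c>0,\ e<1}$. Consider the homotopy $H_t(\Vc,\Ve)=(\Vc,(1-t)\Ve)$ for $t\in[0,1]$. For every $t$ the relation \eqref{Hconecond} is preserved, since $\Vc\cdot\big((1-t)\Ve\big)=(1-t)(\Vc\cdot\Ve)=0$; the inequality $c>0$ is untouched; and $\vmod{(1-t)\Ve}=(1-t)e\leq e<1$ because $0\leq 1-t\leq 1$. Hence each $H_t$ maps $E^*$ into itself, $H_0=\mathrm{id}$, and $H_1$ lands in the set of circular orbits $C=\setdef{(\Vc,0)}{c>0}$, on which $H$ is stationary. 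Thus $H$ is a strong deformation retraction of $E^*$ onto $C$.

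Next, $C$ is homeomorphic to $\Rb^3\setminus\{0\}$ via $(\Vc,0)\mapsto\Vc$, and $\Rb^3\setminus\{0\}$ deformation retracts onto the unit sphere $S^2$ through $\Vc\mapsto\Vc/c$. Composing the two retractions shows that $E^*$ is homotopy equivalent to $S^2$. (Conceptually, $E^*$ is the open unit-disk bundle of the plane field $\Vc\mapsto\Vc^{\perp}$ over $\Rb^3\setminus\{0\}$; since the fibres are convex open disks, hence contractible, the total space collapses onto the base.) In particular $H_2(E^*)\cong H_2(S^2)\cong\Zb\neq 0$, whereas $\widetilde H_2(\Rb^n)=0$ for every $n$. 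Therefore $E^*$ is not homeomorphic to any $\Rb^n$, and the desired topological vector space structure cannot exist.

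The computation is routine; the two points that need genuine care are, first, checking that the contracting homotopy really stays inside $E^*$ — this is exactly where the constraints $\Vc\cdot\Ve=0$ and $e<1$ enter, and where the argument would fail for a space permitting $e\geq 1$ — and, second, the reduction step, which relies on the topology of $E^*$ being Hausdorff so that the classification of finite-dimensional topological vector spaces applies. I expect the main (mild) obstacle to be making the identification $E^*\simeq S^2$ clean, rather than any serious difficulty.
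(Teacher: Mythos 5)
Your proposal is correct and follows essentially the same route as the paper: identify $E^*$ (up to homotopy) with $S^2$ and conclude via a nonvanishing degree-two invariant ($H_2$ in your case, $\pi_2$ in the paper's) that it cannot be homeomorphic to any $\Rb^n$. The only difference is cosmetic — you write the deformation retraction $(\Vc,\Ve)\mapsto(\Vc,(1-t)\Ve)$ explicitly, whereas the paper identifies $E^*$ with the open disk bundle $BT(S^2)\times(0,\infty)$ and invokes the general fact that a bundle with contractible fibres is homotopy equivalent to its base.
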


\begin{theorem}  \label{thm:mainE}
For any $p\in[1,\infty],$ there does not exist a way to define a structure of a finite-dimensional normed vector space over $\Rb$ on the space $E,$ that agree with metric $\varrho_p.$
\end{theorem}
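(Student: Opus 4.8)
The plan is to produce a purely topological obstruction. A finite-dimensional normed vector space over $\Rb$ is homeomorphic to some $\Rb^n$ and is therefore contractible; moreover contractibility is inherited by retracts. Hence it suffices to exhibit, inside $(E,\varrho_p)$, a retract that is homeomorphic to a non-contractible space. I will take that retract to be the $2$-sphere of circular orbits. Since the argument will only use that $\varrho_p$ is a metric on $E$ and that certain geometric quantities are $\varrho_p$-continuous, it will apply uniformly to every $p\in[1,\infty]$.

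First I would isolate the set $S$ of circular orbits of unit angular momentum, i.e.\ the images in $E$ of the points $(\Vc,0)$ with $c=1$. Two such orbits are genuinely distinct circles, lying in the planes orthogonal to their distinct momentum vectors, so $\varrho_p$ separates them. The assignment $\mathbf n\mapsto(\text{unit circular orbit with }\Vc=\mathbf n)$ is then a continuous injection $\sigma\colon S^2\to E$ (all these circles are congruent, of radius $1/\varkappa^2$, and vary continuously with the plane), and since $S^2$ is compact and $E$ is Hausdorff, $\sigma$ is a homeomorphism onto $S$. Thus a copy $S\cong S^2$ sits inside $E$.

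Next I would build a retraction $r\colon E\to S$. Every orbit in $E$ determines its orbital plane and hence a well-defined momentum direction $\pi(\mathcal E)=\Vc/c\in S^2$; I set $r=\sigma\circ\pi$. By construction $\pi\circ\sigma=\mathrm{id}_{S^2}$, so $r$ fixes $S$ pointwise, and $r$ is a genuine retraction provided $\pi$ is $\varrho_p$-continuous. Granting this, suppose $E$ carried a norm agreeing with $\varrho_p$; then $E$ would be homeomorphic to $\Rb^n$, hence contractible, whence its retract $S\cong S^2$ would be contractible as well. This contradicts $H_2(S^2;\Zb)=\Zb\neq 0$, and the contradiction holds for every $p\in[1,\infty]$, proving the theorem.

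The one nonformal step, and the place where the real work lies, is the $\varrho_p$-continuity of $\pi$: I must convert the analytic definition of $\varrho_p$ (an $L^p$, or supremum, norm of $\dist(Q_1(u),Q_2(u+s))$ minimized over the phase shift $s$) into control of the orbital geometry. Fixing a base orbit with $c_0>0$ bounds the semi-major axes of nearby orbits away from $0$, so an orbit that is $\varrho_p$-close to it is close to it as an oriented curve of definite spatial extent, and such a curve cannot lie near a substantially tilted plane; hence its momentum direction is close to $\Vc_0/c_0$. I expect this estimate to be the crux. Away from the circular locus it is routine, because the orbit depends smoothly and injectively on $(\Vc,\Ve)$; near a circular orbit one must check that forgetting the marked pericenter, i.e.\ the minimization over $s$ that collapses the pericenter circle to a point, still produces the Euclidean disc topology in the $(\Vc,\Ve)$-coordinates rather than a conical singularity. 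A direct computation with the eccentric-anomaly parametrization gives the local comparison $\varrho_p(\mathcal E_1,\mathcal E_2)\asymp|\Ve_1-\Ve_2|$, which yields both the continuity of $\pi$ and, as a byproduct, a homeomorphism of $(E,\varrho_p)$ with the disc bundle over $S^2$ underlying the homotopy equivalence $E\simeq S^2$.
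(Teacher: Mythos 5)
Your argument is viable but it is not the route the paper takes, and the paper's route is far shorter: the paper simply observes that $(E,\varrho_p)$ is not complete --- a sequence of coplanar circular orbits with radii tending to zero is Cauchy in every $\varrho_p$ yet has no limit in $E$ --- and that every finite-dimensional normed space is complete. Your homotopy obstruction (retract $E$ onto the sphere $S\cong S^2$ of unit circular orbits and contradict contractibility of $\Rb^n$) is instead the mechanism the paper reserves for $\Hb(b)$ and $E^*$; if carried through for $E$ it proves something strictly stronger than Theorem \ref{thm:mainE}, namely that $(E,\varrho_p)$ admits no structure of a finite-dimensional \emph{topological} vector space at all, putting $E$ on the same footing as Theorems \ref{thm:mainHb}--\ref{thm:mainEstar}. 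Your reading of the quotient is correct ($\varrho_p$ separates circles lying in distinct planes and separates a circle from its orientation reversal, so $S$ really is an embedded $2$-sphere and $\sigma$ is a homeomorphism onto it), and the only unproved step is the one you flag yourself: the $\varrho_p$-continuity of $\pi(\mathcal E)=\Vc/c$. That step genuinely needs an argument --- continuity in the quotient topology of $E^*/\sim$ does not by itself give continuity in the possibly coarser $\varrho_p$-topology --- but it closes more cleanly than your proposed local comparison $\varrho_p\asymp\vmod{\Ve_1-\Ve_2}$, which as written cannot hold since it ignores the dependence on $\Vc_1-\Vc_2$. Writing the eccentric-anomaly parametrization as $Q(u)=\vvec{O}+a\cos u\,\vvec{P}+b\sin u\,\vvec{Q}$, one checks that
$$
F(Q)=\enbrace{\frac{1}{\pi}\int_0^{2\pi}Q(u)\cos u\, du}\times\enbrace{\frac{1}{\pi}\int_0^{2\pi}Q(u)\sin u\, du}=ab\,\frac{\Vc}{c},
$$
that $F$ is unchanged by the reparametrization $u\mapsto u+s$, and that $F$ is locally Lipschitz with respect to the $L^1$-distance of parametrizations, hence with respect to $\varrho_1\leq\varrho_p$ for every $p\in[1,\infty]$. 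Since $ab>0$ on all of $E$, normalizing $F$ gives the continuity of $\pi$ and completes your proof; with that lemma supplied your argument is correct, and stronger, though considerably longer than the completeness argument the paper actually uses.
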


\begin{remark}
Words ``there does not exist a way to define a structure of a finite-dimensional topological vector space'' on some space mean that there does not exist a finite-dimensional topological vector space, that is homomorphic to this space. Analogously, words ``there is not exist a way to define a structure of a finite-dimensional normed vector space'' on some space mean that there isn't exist a finite-dimensional topological vector space with a metric compatible with the topology, that is isometric to this space.
\end{remark}

If it is impossible to define a structure of a finite-dimensional topological vector space then it is also impossible to define a structure of a finite-dimensional normed vector space.
\makeatletter{}\section{Proofs}

The proofs are based on the following topological fact. Topological vector spaces have very simple structure from the topological point of view. All topological facts we are going to use can be found in \cite{hatcher}.

Recall that all the spaces  $\Hb(b),$  $\Hb,$ $E^*$ and $E$ are of dimension $5.$ If one can define a linear structure that agree with the topology, then topological dimension and linear dimension (dimension of finite-dimensional vector space over $\Rb$) must coincide. This is due to the fact that a finite-dimensional vector space has topological dimension that coincides with linear dimension and that manifolds of different topological dimensions cannot be homeomorphic.

We will also use the following two well-known facts:

\begin{statement} \label{st:homeofindim}
Any two finite-dimensional topological vector spaces of the same dimension are homeomorphic to each other.
\end{statement}

\begin{statement} \label{st:isometricfindim}
All norms on a finite-dimensional normed space are equivalent.

\end{statement}

\makeatletter{}\subsection{The space of curvilinear orbits $\Hb(b)$}

\begin{statement} \label{st:curvcong}
For any number $b\geq 0$ we have $\Hb(b)\cong TS^2 \times \Rb.$
\end{statement}
\begin{proof} Recall that
$$\setdef{\Vc\in\Rb^3}{c>b} = \bigcup_{r>b} rS^2 = S^2 \times (b,+\infty).$$
For a fixed vector $\Vc$ equation \eqref{Hconecond} defines a plane in the space $\Rb^3,$ that is orthogonal to the vector $\Vc.$ Considering such planes for vectors from the sphere of a fixed radius $r,$ we get the tangent bundle
$$T(rS^2)=\setdef{(v_1,v_2,v_3,v_4,v_5)\in\Rb^5}{(v_1,v_2,v_3)\in rS^2,\ (v_4,v_5)\in \Rb^2}.$$
Since we can take any $r,$ we have $\Hb(b) = \bigcup\limits_{r>b} T(rS^2)\subset \Rb^5.$ The space $\bigcup\limits_{r>b} T(rS^2)$ is homeomorphically mapped to $TS^2 \times (b,+\infty)$ by the following mapping:
$$f(\vvec{v})=\enbrace{\frac{(v_1,v_2,v_3)}{\vmod{(v_1,v_2,v_3)}},
(v_4,v_5), \vmod{(v_1,v_2,v_3)}},$$
where $\vmod{\cdot}$ is a standard Euclidean norm.

\end{proof}

\begin{statement} \label{st:curvhomnoneq}
The spaces $\Hb(b)$ and $\Rb^5$ are not homotopy equivalent to each other.
\end{statement}
\begin{proof}
$TS^2$ is homotopy equivalent $S^2,$ bacause any fiber bundle with contractible layers is homotopy equivalent to its base. Therefor the space $\Hb(b)$ is homotopy equivalent to $S^2 \times \Rb,$ that is homotopy equivalent to the sphere $S^2.$ It is well-known that $S^2$ is not homotopy equivalent to $\Rb^5,$ because the latter has the property that any $2$-sphere in it is homotopic to a point (one can use a linear homotopy, for example), but the sphere $S^2$ does not have this property. To be more precise, here we use the fact that spaces that are homotopy equivalent must have the same homotopy groups. In our case $\pi_2(S^2)=\Zb,$ but all homotopy groups of $\Rb^5$ are trivial.
\end{proof}

\begin{corollary}[Theorem \ref{thm:mainHb}] \label{cor:curvenonhomeo}
For any $b\geq 0$ there does not exist a way to define a structure of a finite-dimensional topological vector space over $\Rb$ on the space $\Hb(b).$
\end{corollary}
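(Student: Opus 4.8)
The plan is to argue by contradiction, combining the two structural facts already established: that $\Hb(b)$ is homotopically nontrivial (Statement \ref{st:curvhomnoneq}), whereas any finite-dimensional topological vector space is contractible and hence homotopically trivial. Suppose, for contradiction, that $\Hb(b)$ could be equipped with the structure of a finite-dimensional topological vector space $V$ over $\Rb$ compatible with its topology; that is, suppose $\Hb(b)$ were homeomorphic to such a $V$.

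First I would pin down the linear dimension of $V$. Since $\Hb(b)$ is a $5$-dimensional manifold, its topological (covering) dimension equals $5$. Topological dimension is a homeomorphism invariant, and a finite-dimensional vector space over $\Rb$ of linear dimension $n$ has topological dimension $n$. Hence $V$ must have linear dimension exactly $5$.

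Next, by Statement \ref{st:homeofindim}, any two finite-dimensional topological vector spaces of the same dimension are homeomorphic; applying this to $V$ and to $\Rb^5$, which is itself a $5$-dimensional topological vector space, yields $V \cong \Rb^5$. Composing homeomorphisms, $\Hb(b)$ would then be homeomorphic to $\Rb^5$. But a homeomorphism is in particular a homotopy equivalence, so $\Hb(b)$ and $\Rb^5$ would be homotopy equivalent, which directly contradicts Statement \ref{st:curvhomnoneq}. This contradiction completes the argument.

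Since every step after the dimension count is merely a citation of an already-proven statement, there is no genuinely hard step remaining. The only point requiring care is the dimension-matching argument forcing $n=5$, which rests on the invariance of topological dimension under homeomorphism together with the agreement of topological and linear dimension for finite-dimensional real vector spaces; everything else reduces to the elementary observation that homeomorphic spaces are homotopy equivalent.
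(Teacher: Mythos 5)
Your proposal is correct and follows essentially the same route as the paper: the paper's one-line proof ("homeomorphic spaces must be homotopy equivalent, so apply Statement \ref{st:curvhomnoneq}") implicitly relies on exactly the dimension-matching argument and Statement \ref{st:homeofindim} that you spell out, since these are stated in the preamble of the Proofs section. You have merely made the paper's implicit reduction to $\Rb^5$ explicit, which is fine.
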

\begin{proof}
Homeomorphic spaces must be homotopy equivalent, thus it is enough to use the previous statement.
\end{proof}

\begin{remark} \label{rem:curvenorm}

One can prove that there does not exist a way to define a structure of a finite-dimensional normed vector space over $\Rb$ on the space $\Hb(b),$ that agree with standard metric using a simpler argument. Any finite-dimensional normed space is complete, while $\Hb(b)$ is not.
\end{remark}

\subsection{The space of all orbits $\Hb$}

\begin{statement}
The space $\Rb^5$ can not be split by a compact set into two disjoint connected subsets, each having compact closure. I.e. the complement of each compact set in $\Rb^5$ has exactly one connected component with noncompact closure.
\end{statement}
\begin{proof}
Every compact subset $M$ of a metric space is bounded, therefor there exists a ball $B$ with radius big enough, that contain $M.$ Thus every connected component of the complement of $M$ is either inside the ball $B$, or contain the complement of the ball $B,$ that is connected. This is the reason, why this component can be only one, since it is the biggest connected subset, containing the complement. Then the components inside the ball are bounded and their closures are compact and one that contain the complement of the ball is unbounded and has noncompact closure.
\end{proof}

\begin{statement} [Theorem \ref{thm:mainH}]
The space $\Hb$ is not homeomorphic to $\Rb^5.$
\end{statement}
\begin{proof}
For $h=-1$ equation \eqref{Hquadriccond} defines an ellipsoid, which is compact. In the space $\Hb$ both components of the complement of this set ($h<-1$ and $h>-1;$ obviously, they are both connected) are unbounded, since for every $h$ a solution of the equations \eqref{Hquadriccond} and \eqref{Hconecond} exists (for example, one can take $\Vc=0,\ \Ve\in S^2$).
\end{proof}

\begin{remark}
The space $\Hb$ is complete and in this case one cannot just repeat the proof of remark \ref{rem:curvenorm}.
\end{remark}

\begin{statement}
$\Hb\cong \Rb \times S^2 \times S^2.$
\end{statement}
\begin{proof}
The linear change of the coordinates $\vvec{c} \mapsto (\varkappa^2/2)\vvec{c}$ homeomorphically transforms equation \eqref{Hquadriccond} into the following
$$
e^2-hc^2=1.
$$
This change of coordinates does not affect equation \eqref{Hconecond}. We show how to construct a map $E_1:\Hb\to (0,\infty)\times S^2 \times S^2$ and its inverse. Let
\begin{eqnarray}
\vvec{u}=\vvec{c}+\vvec{e},\ \vvec{v}=\vvec{c}-\vvec{e}, \label{uvdef} \\
\vvec{p}=\frac{\vvec{u}}{u},\ \vvec{q}=\frac{\vvec{v}}{v}, \label{pqdef} \\
s=\begin{cases} - \displaystyle\frac{e^2}{c^2}, & c\neq 0, \\ -\infty, & c=0, \end{cases} \label{sdef}\\
k=\begin{cases} e^{-h}-e^s, & c\neq 0, \\ e^{-h}, & c=0. \end{cases} \label{kdef}
\end{eqnarray}
Define a value of $E_1(h,\vvec{c},\vvec{e})$ as $(k,\vvec{p},\vvec{q}),$ where $k,p$ and $q$ are defined using equalities \eqref{uvdef}-\eqref{kdef}.

We construct the inverse mapping $E_2$ in the following way: for $\vvec{p},\vvec{q}\in S^2\subset\Rb^3$ we take
\begin{eqnarray*}
a=1-\vvec{p}\cdot\vvec{q},\ b=1+\vvec{p}\cdot\vvec{q}, \\
s= \begin{cases} -\displaystyle\frac{b}{a}, & \vvec{p}\neq \vvec{q}, \\ -\infty, & \vvec{p} = \vvec{q}. \end{cases}
\end{eqnarray*}
After that we find $h$ from equation \eqref{kdef} and set
$$
r=\sqrt{\frac{2}{b-ah}}.
$$
Now, it is enough to find $\vvec{c}$ and $\vvec{e}$ from equations \eqref{uvdef} for $\vvec{u}=r\vvec{p},\ \vvec{v}=r\vvec{q}$ and set $E_2(k,\vvec{p},\vvec{q})=(h,\vvec{c},\vvec{e}).$

\end{proof}

\makeatletter{}\subsection{The spaces $E^*$ and $E$}

\begin{statement}
For any $p\in[1,\infty]$ the topology, generated on $E^*$ by the metric $\varrho^*_p,$ coincides with the topology, induced from $\Rb^6.$
\end{statement}
\begin{proof}

It is enough to verify that every Euclidean ball $B_{\dist}$ contain some $\varrho^*_p$-ball $B_{\varrho^*_p}$ and vice versa.

Continuous dependence on the initial data of the solution of differential equation \eqref{orbitode} imply that any ball $B_{\dist}$ contains some ball $B_{\varrho^*_p}.$

If one writes inequality $\varrho^*_p\leq \varrho^*_\infty$ and a Taylor polynomial for inequality (13) from \cite{holmetr} then it is possible to estimate the size of  $\vmod{\Vc_1-\Vc_2}$ and $\vmod{\Ve_1-\Ve_2}$ for small $\varrho^*_\infty((\Vc_1,\Ve_1),(\Vc_2,\Ve_2)).$ This implies that any ball $B_{\varrho^*_p}$ contains some ball $B_{\dist}.$

\end{proof}

Denote $$BT(S^2)=\setdef{(p,v)\in TS^2}{\vmod{v} < 1}.$$

\begin{statement} [Theorem \ref{thm:mainEstar}]
There does not exist a way to define a structure of a finite-dimensional topological vector space over $\Rb$ on the space $E^*.$
\end{statement}
\begin{proof}
Analogous to the proof of statement \ref{st:curvcong} one can show that $E^*\cong BT(S^2) \times (0,\infty).$ Using exactly the same reasoning as in the proof of statement \ref{st:curvhomnoneq} we obtain that the space $E^*$ is not homotopy equivalent to the space $\Rb^5$ and thus, cannot be homeomorphic to it, like in corollary \ref{cor:curvenonhomeo}.
\end{proof}

\begin{remark}
As in remark \ref{rem:curvenorm} the absence of normability of the space $E^*$ can be proved easier, using the fact that this space is not complete.
\end{remark}

\begin{statement}
The space $E$ is not complete.
\end{statement}
\begin{proof}
Fix a plane and consider inside it a sequence of circles with radii (bigger semi-major axes) tending to zero. This is a fundamental sequence in any metric $\varrho_p$, and it it does not have a limit since $E$ does not contain ellipses with zero bigger half-axis.
\end{proof}

\begin{corollary}[Theorem \ref{thm:mainE}]
For any $p\in[1,\infty],$ there does not exist a way to define a structure of a finite-dimensional normed vector space over $\Rb$ on the space $E,$ that agree with metric $\varrho_p.$
\end{corollary}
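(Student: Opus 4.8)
The plan is to obtain the corollary as an immediate consequence of the non-completeness of $E$, in the same spirit as Remark \ref{rem:curvenorm} for $\Hb(b)$. First I would unwind the hypothesis: by the convention fixed in the remark following Theorem \ref{thm:mainE}, saying that a finite-dimensional normed structure \emph{agrees with} the metric $\varrho_p$ means that there is a finite-dimensional normed space $(V,\|\cdot\|)$ whose norm-induced metric makes $V$ isometric to $(E,\varrho_p)$. So it suffices to find a property that is invariant under isometry, is possessed by every finite-dimensional normed space, and fails for $(E,\varrho_p)$.

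That property is \emph{completeness}. I would first recall that every finite-dimensional normed space over $\Rb$ is complete: by Statement \ref{st:isometricfindim} any norm on such a space is equivalent to the standard Euclidean norm on $\Rb^n$, equivalent norms have the same Cauchy sequences and the same convergent sequences, and $\Rb^n$ with the Euclidean metric is complete. Next I would note that completeness is an isometric invariant, since an isometry and its inverse both carry Cauchy sequences to Cauchy sequences and convergent sequences to convergent sequences.

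It then remains only to invoke the preceding statement, which exhibits in $(E,\varrho_p)$ a $\varrho_p$-fundamental sequence (coplanar circles whose bigger semi-axes tend to zero) that has no limit in $E$ for every $p\in[1,\infty]$. Hence $(E,\varrho_p)$ is not complete and cannot be isometric to any complete metric space, in particular not to any finite-dimensional normed space; this is precisely the assertion of the corollary. I do not expect any genuine obstacle here: the only point that needs care is matching the phrase ``agree with metric $\varrho_p$'' to an isometry rather than to a mere homeomorphism, which is exactly why in this case one argues by completeness instead of by the homotopy-type invariants used for $\Hb(b)$, $\Hb$ and $E^*$.
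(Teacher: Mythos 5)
Your argument is exactly the paper's: the paper proves the corollary by citing the non-completeness of $E$ (the preceding statement) together with the completeness of every finite-dimensional normed space, just as in Remark \ref{rem:curvenorm}. Your write-up merely fills in the details (via Statement \ref{st:isometricfindim} and isometric invariance of completeness) that the paper leaves implicit, so it is correct and follows the same route.
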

\begin{proof}
Analogous to the proof of \ref{rem:curvenorm}.
\end{proof}

\makeatletter{}\section{Acknowledgements}

Author is grateful to professor K. Kholshevnikov for offering a problem and S. Podkorytov for fruitful discussions.

This work was supported by Chebyshev laboratory (grant of the Russian government N 11.G34.31.0026).


\begin{thebibliography}{99}
\bibitem{msko} Kholshevnikov, K.V.: Metric Spaces of Keplerian Orbits. Celest. Mech. Dyn. Astron. 100(3), 169-179 (2008)
\bibitem{holmetr}  Kholshevnikov, K.V., Vassilev N.N.: Natural Metrics in the Spaces of Elliptic Orbits. Celest. Mech. Dyn. Astron. 89(2), 119-125 (2004)
\bibitem{hatcher} Hatcher A.: Algebraic topology. Cambridge University Press, Cambridge (2002)
\end{thebibliography}
\end{document}